\title{The averaging process on infinite graphs}
\author{Nina Gantert
	\\\normalsize Technical University of Munich\\
	Timo Vilkas
		\\\normalsize Lunds Universitet
}
\theoremstyle{break}
\newtheorem{theorem}{Theorem}[section]
\newtheorem{corollary}{Corollary}[section]
\newtheorem{lemma}{Lemma}[section]
\newtheorem{proposition}{Proposition}[section]
\newtheorem{definition}{Definition}
\let\c@proposition\c@theorem
\let\c@lemma\c@theorem
\let\c@corollary\c@theorem
\newenvironment{proof}{\noindent{\sc Proof:}}{\vspace{-1em}~\hfill $\square$\vspace{2em}}
\newenvironment{nproof}[1]{\noindent{\sc Proof #1:}}{\vspace{-1em}~\hfill $\square$\vspace{2em}}
\newcommand\IN{\mathbb{N}}
\newcommand\IZ{\mathbb{Z}}
\newcommand\IE{\mathbb{E}\,}
\newcommand\Prob{\mathbb{P}}
\renewcommand\epsilon{\varepsilon}
\renewcommand\phi{\varphi}
\definecolor{darkblue}{rgb}{0,0,.5}
\begin{document}
\newpage
\maketitle
\begin{abstract}

\end{abstract}
We consider the averaging process on an infinite connected graph with bounded degree and independent, identically distributed starting values or initial opinions. Assuming that the law of the initial opinion of a vertex has a finite second moment, we show that the opinions of all vertices converge in $L^2$ to the first moment of the law of the initial opinions. A key tool in the proof is the {\em Sharing a drink} procedure introduced by Olle H\"aggstr\"om.\\

\noindent
\textbf{Keywords:} 
Averaging process, law of large numbers, sharing a drink procedure

\section{Introduction}
The averaging process is given as follows: all nodes of a graph carry a (real) value, which we think of as an opinion. The edges are updated according to Poisson clocks, and when an edge is updated, the opinions of the two incident vertices average (or make a step towards each other), see Definition \ref{basicdef} for a precise description. On a finite connected graph, it is easy to see that the opinions of all vertices converge to the sample mean of the starting values (note that the sum of all opinions is preserved by the dynamics).
A lot of recent research is dedicated to investigate the speed of convergence, see for instance \cite{cutoff, repeated1, repeated2}.
It is natural to conjecture that for an infinite graph with bounded degree and i.i.d. starting values with finite mean $m_1$ the opinions of all vertices converge almost surely to $m_1$.
An analogous statement was proved recently for the DeGroot dynamics in \cite{deGroot}.
However, whether or not the conjecture regarding the averaging process holds is still open. In this note, we make a step towards proving it, by showing that for an infinite graph with bounded degree and i.i.d. starting values with a finite second moment, the opinions of all vertices converge in $L^2$ to $m_1$.
Our main result is Theorem \ref{avgthm}. A key tool is the non-random pairwise averaging process H\"aggstr\"om \cite{ShareDrink}
proposed to call {\em Sharing a drink} (SAD) on a graph $G=(V,E)$, see Section \ref{Preliminaries}.
Prompted by a question of ours, Gollin et al.\ \cite{SAD} extended an upper bound for the maximal amount of liquid in SAD (previously known for trees) to general graphs, see Theorem \ref{globalbound}. Our proof uses their result.

\begin{definition}\label{basicdef}
 Consider a simple (connected) graph $G=(V,E)$ with bounded degree, together with an initial profile $\{\eta_0(v)\}_{v\in V}$, where $\eta_t(v)$ is
 the value associated with vertex $v\in V$ at time $t$. The {\em averaging process} consists of updates of the profile
 along single edges of the following form: If $\eta$ is updated along the edge $e=\langle u,w \rangle$ to the extent $\mu\in(0,\frac12]$, the resulting profile $\eta'$ is given by
 
 \begin{equation}\label{average}\begin{aligned}\eta'(u)&=(1-\mu)\,\eta(u)+\mu\,\eta(w),\\
 		\eta'(w)&=\mu\,\eta(u)+(1-\mu)\,\eta(w),\\
 		\eta'(v)&=\eta(v)\quad\text{for all }v\notin\{u,w\}.\end{aligned}
 \end{equation}
In discrete time, the process can be based on a deterministic sequence of edges for the updates or they can be chosen uniformly at random (for finite $G$). In continuous time, one can use independent
homogeneous Poisson point processes having the same intensity $\lambda$ associated with the edges, where an event on edge $e=\langle u,w \rangle$ results in an update of the incident values according to \eqref{average}. The parameter $\mu$ in the convex combination can be chosen to be $\frac12$ (complete averaging), more generally constant or even different for each update.
\end{definition}

In fact, for finite $G$, using independent Poisson-processes to determine edges along which updates take place is a time-continuous way of picking the next edge uniformly at random.
Given countable $V$, also the edge set $E$ is countable and there will almost surely be neither simultaneous Poisson events nor a limit point in time for the Poisson events on edges incident to one fixed vertex.
By that, the well-definedness of the process by \eqref{average} is guaranteed for finite $G$. The
extension to infinite graphs with bounded degree is not immediately obvious but follows from a standard argument,
see e.g.\ Thm.\ 3.9 in \cite{Liggett}. Note that the assumption of bounded degree enters already at this place.

We will work with both the time-continuous process for general initial profiles and in the special case when
$\eta_0$ has only finitely many non-zero values the corresponding embedded time-discrete process (for which the updates involving non-zero incident values are enumerated in chronological order).

Our main results is the following.
Consider the averaging process as described in Definition \ref{basicdef} on an infinite
(connected) graph $G=(V,E)$ with bounded degree in continuous time. We will assume that the random variables
 $\{\eta_0(v), v\in V\}$ are i.i.d. and we will denote
the expected value of the initial profile at some (or any) vertex $v$ by
\begin{equation}\label{defmom}
m_1: = \IE(\eta_0(v)) 
\end{equation}
and the second moment by
\begin{equation}\label{defsecmom}
m_2: = \IE(\eta_0(v)^2) \, .
\end{equation}

\begin{theorem}\label{avgthm}
	Consider the averaging process in continuous time on an infinite graph $G=(V,E)$ with initial profile $\{\eta_0(v)\}_{v\in V}$. Assume that the random variables\\
 $\{\eta_0(v), v\in V\}$ are i.i.d.\ and the marginal distribution $\eta_0(v)$ has a finite second moment. Then it holds
	\[\lim_{t\to\infty}\eta_t(v)=m_1 \quad\text{ in }L^2 \quad\text{ for all }v\in V.\]
\end{theorem}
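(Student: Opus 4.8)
The plan is to exploit that, conditionally on the realization of the Poisson clocks, the averaging process acts \emph{linearly} on the initial profile. Writing $\omega$ for the clock configuration (all Poisson events on all edges), the value at $v$ at time $t$ can be expressed as $\eta_t(v)=\sum_{u\in V}a_t(v,u)\,\eta_0(u)$, where the weights $a_t(v,u)=a_t(v,u;\omega)$ are nonnegative, satisfy $\sum_{u}a_t(v,u)=1$, and are measurable with respect to $\omega$ alone (they are entries of a product of the doubly stochastic matrices encoding the updates \eqref{average}). For fixed $t$ only finitely many edges lie in the \emph{light cone} of $v$, so this sum is a.s.\ finite; this is exactly the well-definedness input from Thm.\ 3.9 in \cite{Liggett}. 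First I would record this representation.

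Since $\{\eta_0(u)\}_{u\in V}$ are i.i.d.\ with mean $m_1$ and variance $m_2-m_1^2$ and are independent of the clocks, I would condition on $\omega$ and use $\sum_u a_t(v,u)=1$ to write $\eta_t(v)-m_1=\sum_u a_t(v,u)\bigl(\eta_0(u)-m_1\bigr)$. The cross terms vanish in conditional expectation by independence and mean-zero centering, leaving
\[\IE\bigl[(\eta_t(v)-m_1)^2\bigr]=(m_2-m_1^2)\,\IE\bigl[\En_t\bigr],\qquad \En_t:=\sum_{u\in V}a_t(v,u)^2 .\]
Thus Theorem \ref{avgthm} reduces to showing $\IE[\En_t]\to0$. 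Because the weights are nonnegative and sum to one, $\En_t\le \max_{u}a_t(v,u)=:\|a_t\|_\infty\le 1$, so it suffices to control this maximal weight.

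The heart of the argument is to show $\|a_t\|_\infty\to0$ almost surely as $t\to\infty$, and here SAD enters. Using the symmetry of the averaging matrices one sees that the weight vector $a_t(v,\cdot)$ is obtained from the unit mass $\delta_v$ by applying the same edge updates as the averaging process but in reverse chronological order; that is, $a_t(v,\cdot)$ is exactly a profile produced by a Sharing a drink procedure started from a single full vertex. I would therefore invoke Theorem \ref{globalbound} to bound $\|a_t\|_\infty$ by the maximal amount of liquid in this SAD process. As $t\to\infty$ every edge fires infinitely often (bounded degree and common intensity $\lambda$), so a.s.\ the drink initially concentrated at $v$ is shared across arbitrarily large portions of the infinite connected graph; the SAD bound then forces the maximal held amount, and hence $\|a_t\|_\infty$, to tend to $0$. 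Combined with $\En_t\le\|a_t\|_\infty\le1$ and dominated convergence this yields $\IE[\En_t]\to0$ and completes the proof.

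I expect the main obstacle to be precisely this last step: translating the combinatorial bound of Theorem \ref{globalbound} into genuine decay of $\|a_t\|_\infty$ for the random, continuous-time dynamics. In particular one must (i) set up the reverse-time duality carefully so that the weight vector is literally a SAD configuration started from one full vertex, which is cleanest after passing to the embedded discrete sequence of relevant updates announced after Definition \ref{basicdef}, and (ii) verify that, almost surely, the sharing really disperses the unit mass so that the regime in which Theorem \ref{globalbound} yields a vanishing bound is eventually reached, rather than the mass lingering near $v$. The remaining points (finiteness of the support, interchange of sum and expectation, and the dominated-convergence step) are routine.
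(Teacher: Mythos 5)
Your reduction coincides step for step with the paper's: your weights $a_t(v,u)$ are the paper's $\xi_t(u,v)$ from the duality Lemma \ref{dual} and Corollary \ref{cordual}, your conditional-variance identity and the bound $\sum_{u}a_t(v,u)^2\leq\max_{u}a_t(v,u)$ are exactly the computation in Section \ref{avg}, and bounded convergence finishes once the maximal weight tends to $0$ almost surely. But that last step is precisely where your argument has a genuine hole, and you flag it yourself in points (i) and (ii) without closing it. Theorem \ref{globalbound} gives $a_t(v,u)\leq\frac{1}{d(u,v)+1}$, which is vacuous for $u$ at bounded distance from $v$ (for $u=v$ it yields only the trivial bound $1$), so the assertion that ``every edge fires infinitely often, hence the SAD bound forces the maximal amount to $0$'' is not a proof: nothing you have said rules out a positive fraction of the mass lingering on the finitely many vertices near $v$. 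There is also a structural wrinkle your outline glosses over: the dual profile $a_t(v,\cdot)$ is \emph{not} a single SAD process evolving in $t$ --- for each new $t$ the whole update sequence is reversed afresh --- so no monotone ``spreading'' intuition applies to it directly, and in particular $\max_u a_t(v,u)$ is not obviously monotone in $t$.

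The paper fills this gap in two steps. First (Lemma \ref{sender}) it switches viewpoint: $\xi_t(u,v)$ is also the value at $v$ of a \emph{forward} averaging process started from $\delta_u$ with the same updates; this is a genuine time evolution, its maximum $X_t(u)=\max_{w}\xi_t(u,w)$ is non-increasing, and an energy-dissipation argument shows $X_t(u)\to0$ a.s.: if the limit were $\geq\epsilon$ on an event of positive probability then, since the values sum to $1$, at most $\lfloor 2/\epsilon\rfloor$ vertices carry value $\geq\epsilon/2$, so some subinterval of $[\epsilon/2,\epsilon]$ of width $\geq\epsilon^2/4$ is avoided by all values; a.s.\ there are infinitely many updates joining vertices on opposite sides of this gap, each reducing the energy $W_t=\sum_{w}\xi_t(u,w)^2\leq 1$ by at least $\mu(1-\mu)\epsilon^4/8$, a contradiction. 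Second (Lemma \ref{recipient}) it combines this with the SAD bound via a finite-ball splitting: given $\epsilon>0$, all $u\notin B_{1/\epsilon}(v)$ contribute at most $\epsilon$ by Theorem \ref{globalbound} (here bounded degree makes the ball finite), while each of the finitely many $u\in B_{1/\epsilon}(v)$ satisfies $\xi_t(u,v)\leq X_t(u)\to 0$, whence $\limsup_t \max_u \xi_t(u,v)\leq\epsilon$ a.s. If you supply this energy/gap lemma and the splitting, your outline becomes the paper's proof; as written, the central claim $\max_u a_t(v,u)\to 0$ a.s.\ remains unproven.
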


\section{Preliminaries}\label{Preliminaries}

A key concept is the non-random pairwise averaging process H\"aggstr\"om \cite{ShareDrink}
proposed to call {\em Sharing a drink} (SAD) on a graph $G=(V,E)$.

Glasses are put, one at each vertex: the one at site $r\in V$ is full, all others are empty. 
As time proceeds neighbors interact and share. To be more precise, the procedure starts with the initial profile $\xi_0=\delta_r$, i.e.\ $\xi_0(r)=1$ and $\xi_0(v)=0$ for all $v\neq r$. In each step, an edge is selected along which the two incident vertices share their water in the same way as described in \eqref{average}: Given profile
$\{\xi_n(v)\}_{v\in\IZ}$ after step $n$, an update along the edge $\langle u,w\rangle$ leads to
\begin{equation}\label{update}\begin{aligned}\xi_{n+1}(u)&=(1-\mu)\,\xi_{n}(u)+\mu\,\xi_{n}(w),\\
		\xi_{n+1}(w)&=\mu\,\xi_{n}(u)+(1-\mu)\,\xi_{n}(w),\\
		\xi_{n+1}(v)&=\xi_{n}(v),\ \text{for all }v\notin\{u,w\}.\end{aligned}
\end{equation}
For arbitrary $n\in\IN_0$, the result $\{\xi_n(v)\}_{v\in V}$ of $n$ updates involving non-empty glasses applied to 
$\xi_0=\delta_r$, will be called an {\em SAD-profile}. Note that these profiles have only finitely many non-zero values, which are all positive and sum to 1. With $d(u,v)$ denoting the graph distance between vertices $u$ and $v$ on $G$,
the following upper bound was recently extended from trees to general graphs by Gollin et al.\ \cite{SAD} using a simple but clever combinatorial argument:
\begin{theorem}\label{globalbound}
	Consider the SAD-process on an arbitrary graph $G=(V,E)$ started in vertex $r$, i.e.\ with
	$\xi_0(v)=\delta_r(v),\ v\in V$. Then the amount at any fixed vertex $v\in V$ is bounded from above by
	$\frac{1}{d(r,v)+1}$.
\end{theorem}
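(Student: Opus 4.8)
The plan is to work throughout with the discrete SAD-profiles $\{\xi_n(u)\}_{u\in V}$ from \eqref{update}, which are nonnegative, supported on finitely many vertices, and satisfy $\sum_{u}\xi_n(u)=1$; it suffices to bound $\xi_n(v)$ uniformly in $n$ for a fixed target $v$ with $d(r,v)=d$. Since each update replaces the two incident values by a convex combination, the total mass is conserved and, crucially, the amount transferred across a single edge is capped by the difference of the two incident values. This averaging cap, rather than mere mass conservation, is what has to drive the bound. The first step is to organize the mass by distance to the target: writing $A_j=\{u:d(u,v)\le j\}$ and $m_j(n)=\sum_{u\in A_j}\xi_n(u)$, we have $A_0=\{v\}$, $r\in A_d\setminus A_{d-1}$, and at $n=0$ the profile $\delta_r$ gives $m_j(0)=\mathbf 1[j\ge d]$. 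The quantity to control is $m_0(n)=\xi_n(v)$.

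The clean prototype is the path. For SAD started at an endpoint of a path, a single update preserves monotonicity of the profile in the distance to that endpoint, so the coordinate at distance $d$ is the smallest among the first $d+1$ coordinates and is therefore at most their mean, which is at most $\tfrac1{d+1}$. The plan is to reproduce this `minimum $\le$ mean over $d+1$ sites' phenomenon on a general graph. Organizing the mass by distance as above, one natural route is a telescoping family of shell-wise inequalities whose product over $j=1,\dots,d$ equals $\tfrac12\cdot\tfrac23\cdots\tfrac{d}{d+1}=\tfrac1{d+1}$, combined with the trivial bound $m_d(n)\le1$; informally this asks the distance-projected mass to behave like a one-dimensional averaging chain launched from a unit mass at site $d$, whose endpoint value never exceeds the eventual uniform value $\tfrac1{d+1}$. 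Reducing Theorem \ref{globalbound} to such a one-dimensional comparison would be the structural backbone of the argument.

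The main obstacle is that this one-dimensional reduction is not exact: each shell $\{u:d(u,v)=j\}$ generally contains many vertices, and the mass crossing the boundary of $A_j$ in a single update depends on the two individual incident values rather than on the shell totals $m_j,m_{j-1}$. The four-cycle already shows that the naive simplifications fail: there a vertex at distance one ($d=1$) can reach the extremal value $\tfrac12$ although no second vertex carries as much mass, so one cannot hope to exhibit $d+1$ vertices each holding at least $\xi_n(v)$ and conclude by `minimum $\le$ mean', nor is the profile monotone in the distance to $v$ once the graph has cycles. Handling this genuinely multi-dimensional cross-boundary flow uniformly in the (arbitrary) update sequence is the crux, and it is exactly here that I would invoke the combinatorial device of Gollin et al.\ \cite{SAD}: rather than comparing shell totals one charges the mass arriving at $v$ against a cleverly chosen family of $d+1$ layers determined by the shortest-path geometry, in a way that is preserved by every update \eqref{update}. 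Setting up that charging and verifying its invariance under a single averaging step is where I expect the real work to lie.
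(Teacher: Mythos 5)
There is a genuine gap: your proposal correctly identifies the crux --- the shell totals $m_j(n)$ do not evolve like a one-dimensional averaging chain, and the ``minimum $\leq$ mean over $d+1$ sites'' argument fails on graphs with cycles --- but it stops exactly there. The final paragraph does not supply the ``charging'' scheme or its invariance under a single update; it explicitly defers that work to the cited reference, so no proof is actually given. The missing idea is a strengthening of the induction hypothesis: instead of tracking $\xi_n(v)$ for a single vertex (or shell totals, both of which are not inductively stable), one bounds the mass of \emph{every} finite subset $S\subseteq V$ simultaneously via the multiplicative potential
\begin{equation*}
f(S)=\prod_{u\in S}\frac{d(r,u)}{d(r,u)+1},\qquad \sum_{u\in S}\xi_n(u)\;\leq\; 1-f(S),
\end{equation*}
which is \eqref{genineq} in the paper. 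This family of inequalities \emph{is} preserved by a half-averaging step: if the update is along $e=\langle u,w\rangle$ with $u\in S$, $w\notin S$, one writes the new sum as $\tfrac12\big(\sum_{S\cup\{w\}}\xi_n+\sum_{S\setminus\{u\}}\xi_n\big)$, applies the hypothesis to both sets, and uses multiplicativity of $f$ together with the elementary inequality $f(\{u,w\})+1\geq 2f(\{u\})$, valid because $|d(r,u)-d(r,w)|\leq 1$ for neighbors. Taking $S=\{v\}$ then yields the theorem. Your distance-to-$v$ layering also points in the wrong direction: the correct potential is built from distances to the \emph{source} $r$, not to the target $v$, and it is a product over the charged set rather than a telescoping product over shells --- the telescoping $\tfrac12\cdot\tfrac23\cdots\tfrac{d}{d+1}$ reappears only at the very end, when $f(\{v\})=\tfrac{d}{d+1}$ is subtracted from $1$.

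A secondary omission: even granting the $\mu=\tfrac12$ case, you do not address general $\mu\in(0,\tfrac12]$. The paper reduces this to complete averaging via Lemma \ref{simplif}, which uses the duality of Lemma \ref{dual} to show that the maximal level attainable at a fixed vertex with at most $n$ updates is achieved with every $\mu$ equal to $0$ or $\tfrac12$ (one checks, by comparing the dual SAD-profile at the two endpoints of the first updated edge, that pushing $\mu_1$ to an extreme never decreases the final level). Without either this reduction or a direct verification that the subset inequality survives asymmetric updates, the stated theorem for arbitrary $\mu\in(0,\tfrac12]$ remains unproved.
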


\begin{proof}
We first consider the case $\mu = \frac12$ and follow the proof of 	Gollin et al.\ \cite{SAD}. Let the function $f: 2^V\to[0,1]$ be defined by
	\[f(S)=\prod_{v\in S}\limits\frac{d(r,v)}{d(r,v)+1},\quad\text{for }S\subseteq V.\]
	It is worth noting two simple facts: On the one hand $f$ is multiplicative, in the sense that $f(A\cup B)=f(A)\cdot f(B)$ for disjoint vertex sets $A,B\subseteq V$, and on the other it is enough to show that for arbitrary SAD-profile $\{\xi(v)\}_{v\in V}$ it holds
	\begin{equation}\label{genineq}
		\sum_{v\in S}\xi(v)\leq 1-f(S),
	\end{equation}
	as the claim then immediately follows by choosing $S=\{v\}$.
	
	Proceeding by induction on the number of rounds, for $\xi_0=\delta_r$ inequality \eqref{genineq} trivially holds since $f(S)=0$ if $r\in S$. If $\xi_{n+1}$ arises from $\xi_{n}$ by sharing the drink along the edge $e=\langle u,w\rangle$, i.e.\
	\[\xi_{n+1}(v)=\begin{cases}
		\frac12\big(\xi_n(u)+\xi_n(w)\big),& \text{for } v\in\{u,w\}\\
		\xi_n(v),& \text{otherwise}	
	\end{cases}\]
	the sum in \eqref{genineq} is only changed for subsets $S$ containing exactly one vertex incident to $e$. Without loss of generality, consider $u\in S,\ w\notin S$. Then the sum can be rewritten as
	\begin{align*}\sum_{v\in S}\xi_{n+1}(v)&=\frac12\bigg(\sum_{v\in S\cup\{w\}}\xi_n(v)+\sum_{v\in S\setminus\{u\}}\xi_n(v)\bigg)\\
		&\leq \frac12 \Big(1-f\big(S\cup\{w\}\big)+1-f\big(S\setminus\{u\}\big)\Big)\\&=1-\frac{f\big(S\setminus\{u\}\big)}{2}\cdot \Big(f\big(\{u,w\}\big)+1\Big),
	\end{align*}
	where both the induction hypothesis and the multiplicativity of $f$ were used. To conclude, note that $d(r,u)\leq d(r,w)+1$ as $u$ and $w$ are neighbors and hence
	\[f\big(\{u,w\}\big)+1=\tfrac{d(r,u)}{d(r,u)+1}\cdot\big(1-\tfrac{1}{d(r,w)+1}\big)+1\geq\tfrac{d(r,u)-1}{d(r,u)+1}+1=2\,f\big(\{u\}\big).\]
	As a consequence, \eqref{genineq} holds for $\xi_{n+1}$ as well which concludes the proof for $\mu=\frac12$. In order to extend the bound to general $\mu\in(0,\frac12]$ the two lemmas below are needed.
\end{proof}

Of course, we can consider other initial profiles than $\eta_0=\delta_r$. Given a finite graph $G$ and a time horizon $t>0$, let $\phi=([e_n,\mu_n])_{1\leq n \leq n(t)}$ be the sequence of edges in $G$, together with the corresponding weights $\mu_n\in(0,\frac12]$ encoding the update steps (in chronological order) until time $t$ (in discrete time $n(t)=t$). Then the profile resulting from the SAD-process started in a given vertex $r\in V$ with reversed time order describes the contributions of all vertices to the current value at $r$.

\begin{lemma}[Duality]\label{dual}
	Consider an initial profile $\{\eta_0(v)\}_{v\in V}$ on a finite graph $G=(V,E)$, together with a sequence 
	$\phi$ of edges and weights encoding the update steps, and fix a vertex $r\in V$. For $n=n(t)\in\IN_0$ we define the SAD-process dual to the update sequence $\phi$ as follows: Starting with $\xi_0=\delta_r$ the profile is updated according to \eqref{update} but with respect to $\overleftarrow{\phi}_t=([e_n,\mu_n],\dots,[e_1,\mu_1])$. Then it holds
	\begin{equation}\label{convcomb}
		\eta_t(r)=\sum_{v\in V}\xi_t(v)\,\eta_0(v).
	\end{equation}
\end{lemma}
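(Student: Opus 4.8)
The plan is to exploit the linearity and---crucially---the symmetry of a single averaging step. Fix $t$ and write $n=n(t)$. Each update along the edge $e_k=\langle u_k,w_k\rangle$ with weight $\mu_k$ acts on a profile as a linear map $A_k\colon\IR^V\to\IR^V$: reading off \eqref{average} and \eqref{update}, the matrix of $A_k$ is the identity except on the $2\times2$ block indexed by $\{u_k,w_k\}$, where it equals $\begin{pmatrix}1-\mu_k & \mu_k\\ \mu_k & 1-\mu_k\end{pmatrix}$. Hence the forward averaging process satisfies $\eta_t=A_nA_{n-1}\cdots A_1\,\eta_0$, while the dual SAD-process, which applies the updates in the reversed order $e_n,e_{n-1},\dots,e_1$ starting from $\delta_r$, satisfies $\xi_t=A_1A_2\cdots A_n\,\delta_r$.

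The key observation is that every $A_k$ is symmetric: the $2\times2$ block above is symmetric and the remainder is the identity, so for the bilinear pairing $(a,b):=\sum_{v\in V}a(v)\,b(v)$ one has $(A_ka,b)=(a,A_kb)$ for all profiles $a,b$ (a one-line check, valid for every $\mu_k\in(0,\tfrac12]$). I would then track this pairing across the two processes by setting, for $0\le j\le n$,
\[
B_j:=\big(\xi_j,\ \eta_{n-j}\big)=\sum_{v\in V}\xi_j(v)\,\eta_{n-j}(v),
\]
where $\xi_0=\delta_r,\xi_1,\dots,\xi_n=\xi_t$ is the dual process (so $\xi_j=A_{n+1-j}\,\xi_{j-1}$) and $\eta_0,\dots,\eta_n=\eta_t$ is the forward process (so $\eta_k=A_k\,\eta_{k-1}$).

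The heart of the argument is that $B_j$ does not depend on $j$. Indeed, the $j$-th dual step uses the edge $e_{n+1-j}$, which is exactly the edge $e_{n-j+1}$ relating $\eta_{n-j}$ to $\eta_{n-j+1}$; writing $k=n+1-j$ and moving $A_k$ across the pairing by symmetry,
\[
B_{j-1}=\big(\xi_{j-1},\eta_{n-j+1}\big)=\big(\xi_{j-1},A_k\,\eta_{n-j}\big)=\big(A_k\,\xi_{j-1},\eta_{n-j}\big)=\big(\xi_j,\eta_{n-j}\big)=B_j.
\]
Evaluating at the endpoints gives $B_0=(\delta_r,\eta_t)=\eta_t(r)$ and $B_n=(\xi_t,\eta_0)=\sum_{v\in V}\xi_t(v)\,\eta_0(v)$, and the claimed identity \eqref{convcomb} follows from $B_0=B_n$.

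I expect no genuine analytic difficulty here, since the argument is essentially the self-adjointness of each averaging operator. The only step demanding care---and the most likely source of off-by-one errors---is the index bookkeeping: one must check that reversing the update sequence aligns the $j$-th dual update with the correct forward edge $e_{n-j+1}$, so that a single symmetric operator can be shuttled from one side of the pairing to the other. An alternative, matrix-free phrasing proves the same invariance by direct induction on $n$, peeling off the last forward update and the first dual update simultaneously; this avoids introducing the $A_k$ but reduces to the same computation.
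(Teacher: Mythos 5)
Your proof is correct, and the delicate part --- the index bookkeeping --- checks out: with $\xi_j=A_{n+1-j}\,\xi_{j-1}$ and $\eta_k=A_k\,\eta_{k-1}$, the identity $B_{j-1}=B_j$ follows precisely from the symmetry of $A_{n+1-j}$, and the endpoint evaluations $B_0=\eta_t(r)$, $B_n=\sum_{v\in V}\xi_t(v)\,\eta_0(v)$ give \eqref{convcomb}. Your route differs in organization from the paper's: there the lemma is proved by induction on $n$, peeling off the \emph{first} forward update $[e_1,\mu_1]$, treating $\eta_1$ as a fresh initial profile to which the induction hypothesis (with the shortened reversed sequence $([e_n,\mu_n],\dots,[e_2,\mu_2])$) applies, and then verifying by explicit coefficient computation that appending $[e_1,\mu_1]$ at the end of the dual SAD-process produces the correct convex combination --- that computation is exactly your one-line self-adjointness check $(A_k a,b)=(a,A_k b)$ written out entrywise for the $2\times 2$ block. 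Your operator formulation makes the mechanism transparent (duality is literally transposition, $(A_n\cdots A_1)^{\mathsf{T}}=A_1\cdots A_n$ because each $A_k$ is symmetric) and generalizes verbatim to any sequence of symmetric stochastic update matrices, whereas the paper's induction stays elementary and matrix-free; structurally the two are the same telescoping computation, carried out once globally via your invariant $B_j$ rather than one step at a time. One minor remark: like the paper, you should note (or at least be aware) that on a finite graph the continuous-time process a.s.\ performs only finitely many updates by time $t$, so passing to the embedded discrete sequence $\phi$ --- which your argument takes as given --- is legitimate; since the lemma's statement already supplies $\phi$ and $n=n(t)$, this is a presentational point rather than a gap.
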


\begin{proof}
	Without loss of generality, we can consider the embedded discrete time process and use $n(t)=n$ to ease notation a little
	(as the number $n(t)$ of updates until time $t$ is a.s.\ finite in the continuous-time version on a finite graph). We prove the statement by induction on $n$. For $n=0$, the statement is trivial.
	For the induction step, fix $n\in\IN$ and assume the first update to be on $e_1=\langle u,w\rangle$. According to 
	(\ref{update}) we get
	$$\eta_1(v)=\begin{cases}\eta_0(v)& \text{if }v\notin\{u,w\}\\
		(1-\mu_1)\,\eta_0(u)+\mu_1\,\eta_0(w)&\text{if }v=u\\
		(1-\mu_1)\,\eta_0(w)+\mu_1\,\eta_0(u)&\text{if }v=w.
	\end{cases}$$
	Let us treat $\{\eta_1(v)\}_{v\in V}$ as an initial profile $\{\eta'_0(v)\}_{v\in V}$. By induction
	hypothesis we know
	\begin{align*}
		\eta'_{n-1}(r)&=\sum_{v\in V}\xi'_{n-1}(v)\,\eta'_0(v)\\
		&=\sum_{v\in V\setminus\{u,w\}}\xi'_{n-1}(v)\,\eta_0(v)
		+\Big((1-\mu_1)\,\xi'_{n-1}(u)+\mu_1\,\xi'_{n-1}(w)\Big)\,\eta_0(u)\\
		&\quad+\Big((1-\mu_1)\,\xi'_{n-1}(w)+\mu_1\,\xi'_{n-1}(u)\Big)\,\eta_0(w),
	\end{align*}
	where $\eta'_{n-1}(v)=\eta_n(v)$ and $\{\xi'_k(v)\}_{v\in V}$, $0\leq k\leq n-1$, are the intermediate
	profiles of the SAD-process corresponding to the sequence $([e_n,\mu_n],\dots,[e_2,\mu_2])$. By definition the original dual SAD-process arises from the shortened one by adding an update along edge $e_1$ with parameter $\mu_1$ at the end. Consequently, we get $\xi_k(v)=\xi'_k(v)$ for all $k\in\{0,\dots,n-1\}$ and $v\in V$ as well as
	$$\xi_n(v)=\begin{cases}\xi_{n-1}(v)=\xi'_{n-1}(v)& \text{if }v\notin\{u,w\}\\
		(1-\mu_1)\,\xi_{n-1}(u)+\mu_1\,\xi_{n-1}(w)&\text{if }v=u\\
		(1-\mu_1)\,\xi_{n-1}(w)+\mu_1\,\xi_{n-1}(u)&\text{if }v=w,
	\end{cases}$$
	which establishes the claim for finite $G$. 
\end{proof}
\begin{corollary}\label{cordual}
	The representation \eqref{convcomb} holds for infinite $G$ as well.
\end{corollary}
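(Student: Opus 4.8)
The plan is to reduce the claim to the finite-graph identity of Lemma \ref{dual} via a finite-range-of-dependence argument. Fix $r\in V$ and $t>0$. On the infinite graph there are almost surely infinitely many Poisson updates in $[0,t]$, so $\phi$ and its reversal are infinite and Lemma \ref{dual} does not apply directly. The key observation is that, almost surely, only finitely many of these updates influence $\eta_t(r)$: there is a (random) finite set of vertices $W\ni r$ such that $\eta_t(r)$ is a function solely of the updates on edges inside $W$ and of $\{\eta_0(v)\}_{v\in W}$, while the dual profile $\xi_t$ (defined as the SAD-process started at $\delta_r$ and run with $\overleftarrow{\phi}_t$) is supported on $W$.

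To establish the finiteness of $W$, I would estimate the backward cone of the space-time point $(r,t)$. For the initial value at a vertex at graph-distance $n$ from $r$ to affect $\eta_t(r)$, there must exist a path $r=v_0,v_1,\dots,v_n$ and update times $0<s_1<\dots<s_n<t$ with an update on $\langle v_{k-1},v_k\rangle$ at time $s_k$; since there are at most $D^n$ such paths (with $D$ the degree bound) and a first-moment bound shows that the probability of a prescribed increasing chain of $n$ updates in $[0,t]$ is at most $(\lambda t)^n/n!$, the probability that influence reaches distance $n$ is at most $(\lambda t D)^n/n!$, which is summable in $n$. By Borel--Cantelli the cone reaches only finitely many distances almost surely, so $W$ is finite; this is the same localization underlying the construction of the dynamics in Thm.\ 3.9 of \cite{Liggett}.

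With $W$ finite, for almost every realization I would pick a finite connected subgraph $G'=(V',E')$ of $G$ containing $W$ together with all edges of $G$ joining two vertices of $W$ (for instance the subgraph induced by a large enough ball around $r$). Deleting every update on an edge outside $G'$ changes neither $\eta_t(r)$ nor $\xi_t$, since none of these updates lies in the backward cone of $(r,t)$; hence the forward averaging process on $G'$, run with the (now finite) restricted update sequence, returns the same value $\eta_t(r)$, and the dual SAD-process on $G'$ returns the same profile $\xi_t$. Lemma \ref{dual} applied to the finite graph $G'$ then gives
\[
\eta_t(r)=\sum_{v\in V'}\xi_t(v)\,\eta_0(v)=\sum_{v\in V}\xi_t(v)\,\eta_0(v),
\]
where the second equality holds because $\xi_t$ vanishes off $W\subseteq V'$. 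This is exactly \eqref{convcomb} for infinite $G$.

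I expect the main obstacle to be the finiteness of the backward cone, and in particular the care needed to check that removing the out-of-cone updates genuinely leaves $\eta_t(r)$ and $\xi_t$ unchanged, i.e.\ that the cone computed inside $G'$ coincides with the one inside $G$. This is exactly where the bounded-degree hypothesis is essential: it is what makes the super-exponential bound above summable and prevents the backward exploration from exploding in finite time.
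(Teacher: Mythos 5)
Your proposal is correct and takes essentially the same route as the paper: both reduce to Lemma \ref{dual} by observing that, for fixed $t$, the support of the dual SAD-profile (equivalently, the backward influence cone of $(r,t)$, characterized by paths from $r$ with time-ordered updates) is almost surely finite, and then apply the finite-graph duality pathwise to a random finite subgraph containing it. The only difference is in how finiteness is justified: you prove it from scratch via the first-moment path count $(D\lambda t)^n/n!$ and Borel--Cantelli, while the paper identifies $\Prob(v\in S_t(r))$ with $\Prob(T(r,v)\leq t)$ for first-passage percolation with i.i.d.\ $\mathrm{Exp}(\lambda)$ weights and invokes $\IE(|C_t(r)|)<\infty$ for the FPP-explored cluster --- the same estimate in disguise.
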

\begin{proof}
	For an infinite graph $G=(V,E)$ it is crucial to observe that the subset of nodes $S_t(r):=\{v\in V,\ \xi_t(v)>0\}$ is characterized by the existence of a path $(e_1,\dots,e_m)$ from $r$ to $v$ with sequential updates in reversed order of the edges. Consequently, even on infinite $G$ the set $S_t(r)$ is finite for finitely many updates in discrete time and a.s.\ finite for fixed time $t$ in the time-continuous version of the process: Let the travel time from $r$ to $v$ in first passage percolation (FPP) with i.i.d.\ $\mathrm{Exp}(\lambda)$ waiting times on $G$ be denoted by $T(r,v)$. Then it holds
	$\Prob(v\in S_r(t))=\Prob(T(v,r)\leq t)=\Prob(T(r,v)\leq t)$,
	hence \[\IE(|S_r(t)|)=\IE\sum_{v\in V}\mathbbm{1}_{\{T(r,v)\leq t\}}=\IE(|C_t(r)|)<\infty,\]
	where $C_t(r)$ is the component explored by FPP started in $r$ until time $t$. Given a fixed time horizon $t$ it is therefore enough to consider a (random but) finite subgraph of $G$ to which Lemma \ref{dual} applies.
\end{proof}

\begin{lemma}\label{simplif}
	Consider the SAD-process on $G=(V,E)$ starting from vertex $r\in V$. Then the highest possible level at arbitrary vertex $s\in V$ achievable with at most $n$ updates of the form \eqref{update} is the same as if $\mu$ is restricted to be $\frac12$ in each update. 
\end{lemma}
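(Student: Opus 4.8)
The plan is to reduce the optimization over the continuous weights $\mu\in(0,\tfrac12]$ to a finite maximization by exploiting the linear structure of the update rule. Write $M_n(r,s)$ for the supremum of $\xi_n(s)$ over all SAD-processes started in $\delta_r$ using at most $n$ updates with arbitrary weights $\mu_i\in(0,\tfrac12]$, and $M_n^{1/2}(r,s)$ for the same quantity when every weight is forced to equal $\tfrac12$. Since every $\mu=\tfrac12$ sequence of length at most $n$ is in particular an admissible general sequence, the inequality $M_n^{1/2}(r,s)\le M_n(r,s)$ is immediate, and only the reverse inequality requires work. For this it suffices to fix an arbitrary ordered edge sequence $(e_1,\dots,e_n)$ and to control $\xi_n(s)$ as a function of the weight vector $(\mu_1,\dots,\mu_n)$ alone.

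The key observation is that, for a fixed edge sequence, the map $(\mu_1,\dots,\mu_n)\mapsto\xi_n(s)$ is affine in each coordinate $\mu_i$ when the remaining weights are held constant. Indeed, the update along $e_i=\langle u,w\rangle$ with weight $\mu_i$ acts on the profile by the linear operator $A_{\mu_i}=I-\mu_i L_{e_i}$, where $L_{e_i}$ maps $\xi$ to the profile taking the value $\xi(u)-\xi(w)$ at $u$, the value $\xi(w)-\xi(u)$ at $w$, and $0$ elsewhere; this operator is affine (in fact linear) in $\mu_i$. Since $\xi_n=A_{\mu_n}\cdots A_{\mu_1}\delta_r$ and $\mu_i$ enters only through the single factor $A_{\mu_i}$, the vector $\xi_n$, and hence its $s$-coordinate, is an affine function of $\mu_i$ once all other weights are fixed.

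A function that is affine in each of finitely many box-constrained variables attains its maximum over the closed box $[0,\tfrac12]^n$ at one of its vertices, that is, at a point where each $\mu_i\in\{0,\tfrac12\}$; this follows by maximizing one coordinate at a time, since an affine function on an interval is maximized at an endpoint. At such a vertex, every update with $\mu_i=0$ is the identity and may simply be discarded, so the resulting value coincides with that of a genuine $\mu=\tfrac12$ SAD-sequence of length at most $n$, and is therefore bounded by $M_n^{1/2}(r,s)$. Because the supremum of $\xi_n(s)$ over the admissible half-open box $(0,\tfrac12]^n$ is dominated by its maximum over the closed box $[0,\tfrac12]^n$, and because padding any sequence of fewer than $n$ updates with no-ops realizes it inside such a box, taking the supremum over all edge sequences yields $M_n(r,s)\le M_n^{1/2}(r,s)$ and hence equality.

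The only genuinely substantive step is the coordinatewise affinity of $\xi_n(s)$ in the weights; everything else is the routine fact that a function affine in each variable is maximized over a box at a vertex, together with the bookkeeping needed to treat the excluded endpoint $\mu_i=0$ as the omission of an update and to pass from exactly $n$ to at most $n$ updates. I do not expect any further obstacle.
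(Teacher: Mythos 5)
Your proof is correct, and its mathematical core---that for a fixed edge sequence the final level $\xi_n(s)$ is affine in each weight $\mu_i$ separately, so each weight can be pushed to an endpoint $0$ or $\tfrac12$, with $\mu_i=0$ meaning the update is discarded---is exactly the engine of the paper's proof as well; but your implementation is genuinely different. The paper obtains the coordinatewise affinity through its duality machinery: it writes $\xi_n(s)=\sum_{v\in V}\xi'_{n-1}(v)\,\xi_1(v)$ via Lemma \ref{dual} and Corollary \ref{cordual}, where $\xi'$ is the SAD-process started at $s$ with the remaining updates reversed in time, reads off the sign of the dependence on $\mu_1$ by comparing $\xi'_{n-1}(u)$ with $\xi'_{n-1}(w)$, pushes $\mu_1$ to $0$ or $\tfrac12$ accordingly, and then iterates by ``treating any update as the first one'' applied to the intermediate profile. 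You instead observe the multi-affine structure directly from the operator product $\xi_n=A_{\mu_n}\cdots A_{\mu_1}\delta_r$ with $A_{\mu}=I-\mu L_e$ (one very minor quibble: the map $\mu\mapsto A_\mu$ is affine, not linear, though $\xi_n(s)$ is indeed affine in each $\mu_i$, which is all you use) and invoke the standard fact that a coordinatewise-affine function on the box $[0,\tfrac12]^n$ is maximized at a vertex. What your route buys is self-containedness---the lemma no longer depends on the duality lemma, and the endpoint optimization is handled in one clean global step rather than a sequential exchange argument; what the paper's route buys is a constructive identification of \emph{which} endpoint improves each weight (a monotone, step-by-step improvement of the update sequence) and the reuse of the duality representation that the paper has already built and uses throughout. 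Your handling of the boundary bookkeeping---dominating the supremum over the half-open box $(0,\tfrac12]^n$ by the maximum over the closed box, padding shorter sequences with no-ops, and noting that a vertex of the box corresponds to a genuine $\mu=\tfrac12$ sequence of length at most $n$---matches the paper's device of setting $\mu_k=0$ for absent updates, and closes the argument correctly.
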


\begin{proof}
	As before let $\phi=([e_k,\mu_k])_{1\leq k \leq n}$ denote the sequence of (general) updates -- set $\mu_k=0$ for the last elements in the sequence if there are less than $n$ updates -- and $\{\xi_{n}(v)\}_{v\in V}$ be the resulting SAD-profile. In particular, the first update of $\xi_0=\delta_r$ is on $e_1=\langle u,w\rangle$ with weight $\mu_1\in[0,\tfrac12]$ following the notation in \eqref{update}.
	Without loss of generality we can assume $\xi_0(u)\leq\xi_0(w)$, which in turn implies $\xi_0(u)\leq\xi_1(u)\leq\xi_1(w)\leq\xi_0(w)$ as $0\leq\mu_1\leq \frac12$. Consider the profile $\{\xi'_{n-1}(v)\}_{v\in V}$ resulting from the SAD-process started in $s$,  with time-reversed update sequence $([e_n,\mu_n],\dots,[e_2,\mu_2])\in (E\times[0,\tfrac12])^{n-1}$. Then Lemma \ref{dual} resp.\ Corollary \ref{cordual} applied to $\eta_0=\xi_1$ and update sequence $\phi'=([e_2,\mu_2],\dots,[e_n,\mu_n])$ gives
	\[\xi_n(s)=\sum_{v\in V}\xi'_{n-1}(v)\,\xi_1(v).\]
	There are two possible cases: either $\xi'_{n-1}(u)\leq\xi'_{n-1}(w)$ or $\xi'_{n-1}(u)>\xi'_{n-1}(w)$.
In the first case changing $\mu_1$ to $0$, i.e.\ erasing the first update, will not decrease $\xi_n(s)$, the level
finally achieved at $s$. In the second case the same holds for raising $\mu_1$ to
$\tfrac12$. Since we can consider any update in the sequence $\phi$ to be the first one applied to the intermediate
SAD-profile achieved so far, this establishes the claim.
\end{proof}

The following two results are neither striking nor novel, yet we include them as they fit the general theme and contribute to keeping this paper self-contained.

\begin{lemma}[Preservation of the average]\label{averagepreserving}
	Consider the averaging process in continuous time on a graph $G=(V,E)$ with i.i.d.\ initial profile $\{\eta_0(v)\}_{v\in V}$. Then it holds for all $r\in V$ and $t\geq 0$, recalling \eqref{defmom}:
\begin{equation}
	\IE(\eta_t(r))= m_1\, .
\end{equation}
\end{lemma}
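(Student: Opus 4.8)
The plan is to reduce the statement to the duality identity \eqref{convcomb} and then exploit the independence between the update mechanism and the initial values. First I would fix $r\in V$ and $t\geq 0$ and appeal to Lemma \ref{dual} together with Corollary \ref{cordual}, which yield the representation $\eta_t(r)=\sum_{v\in V}\xi_t(v)\,\eta_0(v)$, where $\{\xi_t(v)\}_{v\in V}$ is the SAD-profile dual to the (time-reversed) update sequence up to time $t$. Two features of this representation are decisive. On the one hand, the weights $\{\xi_t(v)\}$ are measurable with respect to the $\sigma$-algebra $\mathcal F$ generated by the Poisson clocks alone, and are therefore independent of the family $\{\eta_0(v)\}$. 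On the other hand, being an SAD-profile, they are nonnegative, sum to $\sum_{v\in V}\xi_t(v)=1$, and are supported on the a.s.\ finite set $S_t(r)$ from Corollary \ref{cordual}.

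With this in hand I would condition on $\mathcal F$. Since each $\xi_t(v)$ is $\mathcal F$-measurable while $\eta_0(v)$ is independent of $\mathcal F$ with $\IE(\eta_0(v))=m_1$, conditional linearity gives
\[\IE\big(\eta_t(r)\,\big|\,\mathcal F\big)=\sum_{v\in V}\xi_t(v)\,\IE\big(\eta_0(v)\,\big|\,\mathcal F\big)=m_1\sum_{v\in V}\xi_t(v)=m_1,\]
where the last equality uses that the SAD-profile is a probability distribution. Taking expectations and applying the tower property then yields $\IE(\eta_t(r))=m_1$, as desired.

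The only genuine care is needed in justifying the interchange of the (conditional) expectation with the random, a.s.\ finite sum. I would dispose of this via the absolute version first: all summands in $\sum_{v\in V}\xi_t(v)\,|\eta_0(v)|$ are nonnegative, so conditional Tonelli applies termwise and gives $\IE\big(\sum_{v}\xi_t(v)\,|\eta_0(v)|\,\big|\,\mathcal F\big)=\sum_v\xi_t(v)\,\IE|\eta_0(v)|=\IE|\eta_0(v)|<\infty$, the finiteness following from the assumed finite second (hence first) moment. Integrating, $\IE|\eta_t(r)|\leq\IE|\eta_0(v)|<\infty$, so $\eta_t(r)$ is integrable and the conditional computation above is legitimate. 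The main (and rather modest) obstacle is thus a purely technical Fubini/Tonelli bookkeeping issue arising from the random support $S_t(r)$; the probabilistic content, namely preservation of the mean, follows at once from the independence of the clocks and the initial values together with $\sum_v\xi_t(v)=1$.
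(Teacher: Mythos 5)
Your argument is correct and follows essentially the same route as the paper's proof: the duality representation \eqref{convcomb}, conditioning on the $\sigma$-field generated by the Poisson processes, independence of the $\mathcal F$-measurable SAD-weights from the i.i.d.\ initial values, and $\sum_{v\in V}\xi_t(v)=1$ combined with the tower property. Your explicit Tonelli/integrability bookkeeping is just a more careful spelling-out of the paper's closing remark that there are no convergence issues because the sum a.s.\ consists of finitely many nonzero summands.
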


\begin{proof}
This is an immediate consequence of \eqref{convcomb}: Fix arbitrary $r\in V$ and let $\mathcal{F}_t$ denote the sigma field generated by all Poisson-processes until time $t\geq0$. Then $\{\xi_t(v),\; v\in V, t\geq 0\}$, the SAD-process dual to the updates (started in $r$), is adapted to the filtration $(\mathcal{F}_t)_{t\geq 0}$ and we can conclude
\begin{align*}\IE\big(\eta_t(r)\big)&=\IE\big(\IE[\eta_t(r)\,|\,\mathcal{F}_t]\big)=\IE\bigg(\sum_{v\in V}\xi_t(v)\cdot\IE[\eta_0(v)\,|\,\mathcal{F}_t]\bigg)\\&=m_1 \cdot\IE\bigg(\underbrace{\sum_{v\in V}\xi_t(v)}_{=1}\bigg),\end{align*}
using the tower property, duality equation \eqref{convcomb} as well as the independence of the Poisson processes and initial configuration. Note that there are no convergence issues as the sum a.s.\ consists of finitely many summands.
\end{proof}

\begin{lemma}\label{finite}
	Consider an initial profile $\{\eta_0(v)\}_{v\in V}$ on a finite (connected) graph $G=(V,E)$ and let $n:=|V|$ denote the number of vertices. Provided that, given a fixed $t\geq0$, there will (a.s.) be updates on all edges after time $t$, the values converge almost surely to the average, i.e.\ for any $v\in V$ it holds
	\[
		\lim_{t\to\infty}\eta_t(v)=\frac{1}{n}\sum_{v\in V}\eta_0(v)\text{\ a.s.}\]
\end{lemma}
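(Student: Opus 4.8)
The plan is to exhibit a deterministic Lyapunov functional measuring the spread of the profile, show that its expectation decays exponentially, and then use its pathwise monotonicity to upgrade this to almost sure convergence. Write $a:=\frac1n\sum_{v\in V}\eta_0(v)$ for the empirical average of the starting values. Since every update \eqref{average} along an edge $\langle u,w\rangle$ preserves the sum $\eta(u)+\eta(w)$ and leaves all other values untouched, the total $\sum_{v\in V}\eta_t(v)$ is conserved for all $t$; hence $a$ is the only possible common limit, and it suffices to show $\eta_t(v)\to a$. To quantify the distance from consensus I would use
\[ V_t:=\sum_{v\in V}\big(\eta_t(v)-a\big)^2 .\]
A direct computation (setting $\alpha=\eta_t(u)-a$, $\beta=\eta_t(w)-a$ and expanding the convex combinations in \eqref{average}) shows that a single update along $\langle u,w\rangle$ changes $V$ by exactly $-2\mu(1-\mu)\big(\eta_t(u)-\eta_t(w)\big)^2\le 0$. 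Thus $t\mapsto V_t$ is non-increasing along every trajectory, bounded in $[0,V_0]$, and therefore converges pathwise to some $V_\infty\ge 0$.

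Next I would control $\IE V_t$ through the generator $\mathcal L$ of the process. Each edge rings at rate $\lambda$, so by the decrement just computed
\[ \mathcal L V(\eta)=\lambda\sum_{e=\langle u,w\rangle\in E}\big(V(\eta^e)-V(\eta)\big)=-2\lambda\mu(1-\mu)\sum_{\langle u,w\rangle\in E}\big(\eta(u)-\eta(w)\big)^2 . \]
The last sum is the Dirichlet form $\eta^\top L\eta$ of the graph Laplacian $L$, and since $G$ is finite and connected, $L$ has $\mathbf 1$ as its only null direction; by the variational characterization of the second eigenvalue (the algebraic connectivity $\lambda_2>0$) one has $\eta^\top L\eta\ge\lambda_2\,V(\eta)$. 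Because $V_t$, and hence $\mathcal L V_t$, are bounded (all values stay between $\min_u\eta_0(u)$ and $\max_u\eta_0(u)$), Dynkin's formula lets me differentiate $\IE V_t$ and gives the differential inequality
\[ \tfrac{d}{dt}\,\IE V_t=\IE\big[\mathcal L V_t\big]\le -2\lambda\mu(1-\mu)\lambda_2\,\IE V_t , \]
whence $\IE V_t\le V_0\,e^{-2\lambda\mu(1-\mu)\lambda_2\,t}\to 0$.

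Finally I would combine the two facts. Since $V_t\downarrow V_\infty$ almost surely with $0\le V_t\le V_0$, monotone (dominated) convergence yields $\IE V_\infty=\lim_{t\to\infty}\IE V_t=0$; as $V_\infty\ge0$ this forces $V_\infty=0$ almost surely. Unravelling the definition of $V$, every summand $(\eta_t(v)-a)^2\to0$ almost surely, which is precisely the assertion $\lim_{t\to\infty}\eta_t(v)=a=\frac1n\sum_{v\in V}\eta_0(v)$ for each $v\in V$.

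The steps needing the most care are the following. Connectivity enters exactly once, through the strict positivity of $\lambda_2$, so the hypothesis that all vertices are linked (equivalently, that every edge keeps being updated) is indispensable here; it is also what guarantees the full edge-sum, rather than a proper subset of it, appears in $\mathcal L V$. The exchange of differentiation and expectation is routine via Dynkin's formula, once one notes that the profile stays in the compact box $[\min_u\eta_0(u),\max_u\eta_0(u)]^V$, so all quantities are bounded. The one genuine caveat is the role of $\mu$: the decrement carries a factor $\mu(1-\mu)$, so the argument needs $\mu$ bounded away from $0$ (e.g.\ constant, as we assume). If one allowed $\mu_k\to0$ fast along the updates the process could fail to reach consensus --- already on a single edge, weights with $\sum_k\mu_k<\infty$ leave a persistent gap, since the difference is multiplied by $\prod_k(1-2\mu_k)>0$ --- so uniform positivity of the averaging parameter is used. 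An alternative, fully pathwise route would track the monotone maximum $M_t$ and minimum $m_t$ and argue that one full sweep of all edges contracts the range by a fixed factor; there the main obstacle is the accumulation of drift between successive updates of a given edge, which the expectation-plus-monotonicity argument above sidesteps entirely.
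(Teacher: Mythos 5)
Your proof is correct, but it takes a genuinely different route from the paper's, and it is worth noting that it proves a slightly narrower statement. The paper argues pathwise and by contradiction: it tracks the monotone extremes $X_t=\max_v\eta_t(v)$, $Y_t=\min_v\eta_t(v)$ and the energy $W_t=\sum_v\eta_t(v)^2$ (your $V_t$ up to centering), uses the same per-update decrement $2\mu(1-\mu)\big(\eta_t(u)-\eta_t(w)\big)^2$, and shows that if $X-Y\geq\epsilon$ persisted, a pigeonhole gap of width $\epsilon/n$ in the value range would separate the vertices into two nonempty groups; connectivity plus the hypothesis that every edge is updated after any fixed time forces infinitely many updates across the gap, each costing at least $2\mu(1-\mu)(\epsilon/n)^2$ from a finite energy budget --- a contradiction. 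Your route replaces this qualitative step by the spectral gap: $\mathcal{L}V=-2\lambda\mu(1-\mu)\,\eta^\top L\eta\leq-2\lambda\mu(1-\mu)\lambda_2\,V$, Gronwall, and then the pathwise monotonicity of $V_t$ to upgrade $\IE V_t\to0$ to almost sure convergence (your centered vector is indeed orthogonal to $\mathbf 1$, so the bound via the algebraic connectivity is legitimate). What your approach buys is an explicit exponential rate, which the paper's argument does not provide; what it costs is generality. The lemma as stated assumes nothing about the update mechanism beyond the hypothesis that a.s.\ every edge is updated after each fixed time $t$ --- it covers deterministic update sequences, non-exponential update times, and time-inhomogeneous schemes --- whereas your generator/Dynkin computation is tied to the Markov dynamics with independent rate-$\lambda$ Poisson clocks and constant $\mu$. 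A deterministic cyclic sweep of the edges, for instance, satisfies the lemma's hypothesis and is handled by the paper's proof but falls outside yours. Two minor points: since the lemma imposes no moment assumptions, you should condition on $\eta_0$ (independent of the clocks) so that $V_0$ is a finite constant before applying Gronwall and taking expectations; and your caveat about $\mu$ bounded away from $0$ is well taken --- it applies equally to the paper's own proof, which also relies on the factor $\mu(1-\mu)$ being uniformly positive.
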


\begin{proof}
Consider $X_t:=\max\{\eta_t(v),\;v\in V\}$ and $Y_t:=\min\{\eta_t(v),\;v\in V\}$. Since $(X_t)_{t\geq 0}$ is decreasing, $(Y_t)_{t\geq 0}$ increasing and $X_t\geq Y_t$, trivially both limits $X:=\lim X_t$ and $Y:=\lim Y_t$ exist. As $\sum_{v\in V}\eta_t(v)<\infty$ is constant in time, what needs to be shown is that $X=Y$ almost surely.

Define the total energy at time $t$ by $W_t=\sum_{v\in V}\eta_t(v)^2$ and note that it is decreasing in $t$ as an update on $e=\langle u,v\rangle\in E$ with incident values $(\eta_t(u),\eta_t(v))$ reduces the total energy by $2\mu(1-\mu)(\eta_t(u)-\eta_t(v))^2$.

Assume for contradiction that there exists $\epsilon >0$ such that $X-Y\geq\epsilon$ has positive probability. By monotonicity we can conclude $X_t-Y_t\geq X-Y$ and further that given $X-Y\geq\epsilon$ there exists a subinterval $[a,b]\subseteq(Y_t,X_t)$ of length $b-a\geq\frac{\epsilon}{n}$, such that $\eta_t(v)\notin [a,b]$ for all $v\in V$; the $n$ values $\{\eta_t(v),\, v\in V\}$ simply can't cover $[X_t,Y_t]$ with a finer mesh. This subinterval splits the nodes in two non-empty groups: those with values $\eta_t(v)<a$ and those with $\eta_t(v)>b$. Since $G$ is connected, all edges are updated after time $t$ and values can only enter $[a,b]$ when two neighbors not belonging to the same group average, eventually there will be such an update, causing a loss of energy of at least  $2\mu(1-\mu)(\frac{\epsilon}{n})^2$.
However, since $W_0$ is (a.s.) finite and $\lim W_t\geq 0$, this can happen only a finite number of times, forcing $\{X-Y\geq\epsilon\}$ to be a null-set. This contradicts our assumption and concludes the proof.
\end{proof}

We point out that for the continuous-time version (in particular when updated edges are chosen uniformly at random) the condition that a.s.\ all edges are updated after any given time $t$ is met.

\section{Proof of Theorem \ref{avgthm}}\label{avg}

From \eqref{convcomb}, we know that the water levels of the SAD-process started in $v$ dual to
the update sequence until time $t$ describe the contributions to $\eta_t(v)$, which denotes the value associated with vertex $v$ at time $t$ and is a convex combination of the initial values $\{\eta_0(v)\}_{v\in V}$. As different target vertices $v$ will come into play (and as a consequence dual SAD-processes started in different vertices have to be considered) let us adapt the notation by writing $\xi_t(u,v)$ for the contribution of $\eta_0(u)$ to $\eta_t(v)$, which of course corresponds to $\xi_t(u)$ in the dual SAD-process started at $v$. Then the duality equation \eqref{convcomb} reads
\[\eta_t(v)=\sum_{u\in V}\xi_t(u,v)\,\eta_0(u).\]
As updates lead to convex combinations of the incident values preserving the sum, it is easy to verify that for arbitrary $u,v\in V$ and $t>0$ it holds
\[\sum_{v\in V}\xi_t(u,v)=\sum_{u\in V}\xi_t(u,v)=1\]
and from Theorem \ref{genineq} we know that $\xi_t(u,v)\leq \frac{1}{d(u,v)+1}$.

In the sequel, we will utilize the fact that $\xi_t(u,v)$, the coefficient of $\eta_0(u)$ in the sum representing
$\eta_t(v)$ as a convex combination of initial values, can be seen either as the value at vertex $v$ at time $t$ in a simultaneous averaging process with the same updates but initial profile $\delta_u$ or as the water level at vertex $u$ of the SAD-process started in $v$ that is dual to the update sequence during the time interval $[0,t]$, cf.\ \eqref{convcomb}.

On a transitive graph $G$, the mass-transport principle (cf.\ \cite{MTP}, p.\ 43) immediately gives for fixed $u,v\in V$
\[\IE\big[\xi_t(u,v)\big]=\IE\big[\xi_t(v,u)].\]
Owing to the fact that the evolution of the process is based on independent Poisson-processes, an even stronger statement holds in fact for general $G$:

\begin{proposition}
	Consider the averaging process in continuous time on graph $G=(V,E)$ and for fixed $u,v\in V$, let $\xi_t(u,v)$ be the coefficient of $\eta_0(u)$ in $\eta_t(v)=\sum_{u\in V}\limits\xi_t(u,v)\,\eta_0(u)$. Then it holds
	\[\xi_t(u,v)\stackrel{d}{=}\xi_t(v,u).\]
\end{proposition}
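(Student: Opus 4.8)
The plan is to exploit the same self-duality used in Lemma \ref{dual}, now read \emph{between} the two vertices $u$ and $v$, together with the time-reversal invariance of the driving Poisson processes. First I would fix a realization of the update sequence on $[0,t]$, written in chronological order as $\phi=([e_1,\mu_1],\dots,[e_n,\mu_n])$, and observe that running the averaging process forward is just applying a product of matrices. Each single update along $\langle a,b\rangle$ with weight $\mu$ acts on a profile by a doubly stochastic matrix which is the identity off the coordinates $a,b$ and carries the symmetric block $\bigl(\begin{smallmatrix}1-\mu&\mu\\ \mu&1-\mu\end{smallmatrix}\bigr)$ on them; call it $M_k$ for the $k$-th update. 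Then $\eta_t=M_n\cdots M_1\,\eta_0$, so writing $A_\phi:=M_n\cdots M_1$ we have exactly $\xi_t(u,v)=(A_\phi)_{v,u}$, the coefficient of $\eta_0(u)$ in $\eta_t(v)$.

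The key algebraic observation is that every $M_k$ is symmetric, whence $A_\phi^{\mathsf T}=M_1\cdots M_n=A_{\overleftarrow\phi}$, where $\overleftarrow\phi=([e_n,\mu_n],\dots,[e_1,\mu_1])$ denotes the time-reversed update sequence. This gives the pathwise identity
\[ \xi_t(u,v)[\phi]=(A_\phi)_{v,u}=(A_\phi^{\mathsf T})_{u,v}=(A_{\overleftarrow\phi})_{u,v}=\xi_t(v,u)[\overleftarrow\phi], \]
i.e.\ the contribution of $u$ to $v$ under $\phi$ equals the contribution of $v$ to $u$ under the reversed sequence. This is precisely the matrix form of the duality of Lemma \ref{dual} and Corollary \ref{cordual}.

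It then remains to upgrade this deterministic identity to an equality in distribution. For this I would show that the random chronological update sequence $\Phi$ on $[0,t]$ has the same law as its reversal $\overleftarrow\Phi$. Since the updates arise from independent homogeneous Poisson processes of common intensity $\lambda$ on the edges (with the weights either constant or i.i.d.\ and independent of the clocks), reflecting the time axis about $t/2$ maps each Poisson process on $[0,t]$ to another Poisson process of the same intensity and permutes the attached weights in a law-preserving way; hence $\overleftarrow\Phi\stackrel{d}{=}\Phi$. Writing $F(\phi):=\xi_t(v,u)[\phi]$ for the relevant measurable functional, the pathwise identity reads $\xi_t(u,v)=F(\overleftarrow\Phi)$ while $\xi_t(v,u)=F(\Phi)$, and the distributional equality follows immediately.

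The main obstacle I anticipate is this last step: making genuinely precise that the marked point process of (time, edge, weight) updates is invariant under time reversal, including the handling of the weights and the almost sure finiteness and well-definedness of $n=n(t)$ on the relevant finite random subgraph guaranteed by Corollary \ref{cordual}. By contrast, the algebraic core is routine once the symmetry of the single-update matrices is noted.
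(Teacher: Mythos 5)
Your proposal is correct and follows essentially the same route as the paper: the pathwise transpose identity $\xi_t(u,v)[\phi]=\xi_t(v,u)[\overleftarrow\phi]$ is exactly the duality of Lemma \ref{dual} and Corollary \ref{cordual} (which the paper phrases via the SAD-process rather than products of symmetric doubly stochastic matrices), and the distributional upgrade via time-reversal invariance of the driving Poisson processes on $[0,t]$ is precisely the paper's second step. Your matrix formulation is a clean, if equivalent, way of packaging the same two ingredients, and your attention to the a.s.\ finiteness of $n(t)$ via Corollary \ref{cordual} matches the paper's implicit treatment.
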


\begin{proof}
	Given the observation above that $\xi_t(u,v)$ emerges both as the water level at $v$ of the SAD-process started in $u$ and the water level at $u$ of the dual SAD-process (i.e.\ with the updates during $[0,t]$ reversed in time) started in $v$, the claim readily follows from the fact that the Poisson-processes are time-reversible. To be more precise, the SAD-process started in $u$ until time $t$ and the dual SAD-process (with respect to the time interval $[0,t]$) started in $u$ have the same distribution. While the former leads to the water level $\xi_t(u,v)$ at vertex $v$, the value at vertex $v$ at time $t$ in the latter is $\xi_t(v,u)$.
\end{proof}

Next, we prove the fact that the contributions $\xi_t(v,u)$ maximized over either sender or recipient almost surely converge to 0.

\begin{lemma}\label{sender}
	Let $X_t(u)=\max\{\xi_t(u,v),\; v\in V\}$ denote the maximal contribution by vertex $u$ to another vertex at time $t$. Then $X_t(u)\stackrel{\text{a.s.}}{\longrightarrow} 0$, as $t\to\infty$.
\end{lemma}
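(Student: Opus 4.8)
The plan is to exploit the observation made just before the statement that, for fixed $u$, the family $\{\xi_t(u,v)\}_{v\in V}$ is itself an averaging process started from the profile $\delta_u$. Since every update replaces the two incident values by convex combinations of them, the global maximum $X_t(u)=\max_{v}\xi_t(u,v)$ can only decrease in $t$ (the maximum is attained because $\xi_t(u,v)\le\tfrac1{d(u,v)+1}\to0$, so only finitely many vertices carry more than any fixed amount). Being non-increasing and bounded below by $0$, $X_t(u)$ converges almost surely to some limit $X(u)\ge0$, and it suffices to show $\Prob(X(u)\ge\epsilon)=0$ for every $\epsilon>0$.

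Fix $\epsilon>0$ and suppose for contradiction that $A=\{X(u)\ge\epsilon\}$ has positive probability; by monotonicity $X_s(u)\ge\epsilon$ for all $s\ge0$ on $A$. The key geometric step is to produce, at every time $s$ on $A$, a single edge carrying a value gap bounded below by a constant depending only on $\epsilon$. Consider the ``high set'' $H_s=\{v:\xi_s(u,v)\ge\epsilon/2\}$. Since $\sum_v\xi_s(u,v)=1$ we have $|H_s|\le2/\epsilon$, while the maximizer $v^\ast$ of $\xi_s(u,\cdot)$ lies in $H_s$ because $\xi_s(u,v^\ast)\ge\epsilon$. As $G$ is connected and infinite but $H_s$ is finite, a shortest path $v^\ast=x_0,x_1,\dots,x_k$ from $v^\ast$ to $V\setminus H_s$ has $x_0,\dots,x_{k-1}\in H_s$, hence $k\le|H_s|\le2/\epsilon$; telescoping gives $\sum_{i=0}^{k-1}\big(\xi_s(u,x_i)-\xi_s(u,x_{i+1})\big)=\xi_s(u,x_0)-\xi_s(u,x_k)\ge\epsilon-\tfrac\epsilon2=\tfrac\epsilon2$, so some edge $\langle x_i,x_{i+1}\rangle$ has value gap at least $\delta_0:=\epsilon^2/4$.

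I would then close the argument with the energy functional already used in Lemma \ref{finite}. Put $W_t=\sum_v\xi_t(u,v)^2$, which is non-increasing with $W_0=1$, each update along $\langle a,b\rangle$ lowering it by $2\mu(1-\mu)\big(\xi(u,a)-\xi(u,b)\big)^2$. By a compensator (Dynkin) identity the expected total dissipation satisfies
\[\IE\Big[\int_0^\infty\sum_{e=\langle a,b\rangle}\lambda\,2\mu(1-\mu)\big(\xi_s(u,a)-\xi_s(u,b)\big)^2\,ds\Big]=\IE[W_0-W_\infty]\le1.\]
On $A$, however, the previous step exhibits at every time $s$ an edge with gap at least $\delta_0$, so the integrand is bounded below by $\lambda\,2\mu(1-\mu)\delta_0^2>0$ for all $s$; hence the inner integral equals $+\infty$ on $A$. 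As $\Prob(A)>0$, the left-hand side is infinite, contradicting the bound $\le1$. Therefore $\Prob(X(u)\ge\epsilon)=0$ for all $\epsilon>0$, which gives $X_t(u)\to0$ almost surely.

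The step I expect to be delicate is the energy accounting in continuous time: one must justify the compensator identity above (equivalently, that $W_t$ together with its cumulative dissipation is a martingale) on an infinite graph, which is exactly where boundedness of the degree and the almost sure finiteness of the region explored up to time $t$ (Corollary \ref{cordual}) enter. A secondary point is the dependence on $\mu$: the lower bound $\lambda\,2\mu(1-\mu)\delta_0^2>0$ requires $\mu$ to stay bounded away from $0$, which is automatic for constant $\mu$ (in particular $\mu=\tfrac12$), whereas permitting $\mu_n\to0$ along the update sequence would call for an additional argument.
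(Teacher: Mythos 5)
Your proof is correct and rests on the same Lyapunov functional $W_t=\sum_{v}\xi_t(u,v)^2$ and the same contradiction scheme (monotone limit $X(u)$, positive probability of $\{X(u)\geq\epsilon\}$, bounded total energy) as the paper's, but the way you extract the contradiction differs genuinely in two places. First, where the paper pigeonholes on the \emph{values} -- since at most $\lfloor 2/\epsilon\rfloor$ vertices carry more than $\epsilon/2$, some subinterval of $[\epsilon/2,\epsilon]$ of width $\epsilon^2/4$ is free of values, and it then asserts that an update across this value gap will a.s.\ occur after any given time -- you pigeonhole \emph{spatially}: telescoping along a shortest path from the maximizer out of the finite high set $H_s$ exhibits, at every single time $s$ on the bad event, an actual edge whose endpoint values differ by at least $\epsilon^2/4$. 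Second, where the paper counts discrete dissipation quanta (each crossing update costs at least $\mu(1-\mu)\epsilon^4/8$, so only finitely many can occur), you integrate the expected dissipation rate and use a compensator identity to get $\IE\big[\int_0^\infty \sum_e \lambda\,2\mu(1-\mu)(\Delta_e\xi_s)^2\,ds\big]\leq 1$, which is incompatible with the rate staying above $\lambda\,2\mu(1-\mu)(\epsilon^2/4)^2$ forever on an event of positive probability. Your route buys something concrete: the paper's step ``there will a.s.\ be an update along an edge with incident vertices having values on different sides of the gap'' is left somewhat informal (both the gap and the crossing edges move in time), whereas your telescoping argument supplies a crossing edge deterministically at \emph{all} times, so that only the compensator identity remains to be checked. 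That identity is the one step you correctly flag as delicate; it is fillable with the paper's own tools, since the set of vertices with nonzero value is a.s.\ finite with integrable size (the first-passage-percolation argument in the proof of Corollary \ref{cordual}), so one can localize to a finite random subgraph where $W_t$ plus its cumulative dissipation is a bounded martingale, and pass to the limit by monotone convergence. Finally, your caveat about $\mu$ is shared by the paper, whose quantum $\mu(1-\mu)\epsilon^4/8$ likewise treats $\mu$ as constant: both proofs need $\mu(1-\mu)$ bounded away from $0$ along the update sequence, which Definition \ref{basicdef}'s ``different $\mu$ for each update'' does not by itself guarantee.
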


\begin{proof}
	As above, we consider $\{\xi_t(u,v),\,v\in V\}$ as the configuration at time $t$ of the averaging process on $G$ in continuous time started with $\eta_0=\delta_u$, which was earlier denoted by $\{\eta_t(u),\,u\in V\}$. 
	Adopting this notation we can follow the lines of argument in the proof of Lemma \ref{finite}:
	Let $W_t=\sum_{v\in V}\eta_t(v)^2$ denote the total energy at time $t$. Then both $X_t(u)$ and $W_t$ are piecewise constant, right-continuous (random) functions and decreasing in $t$. Moreover, they are $[0,1]$-valued as $\eta_0=\delta_u$ implies $X_0(u)=W_0=1$. As a consequence, the pointwise limit \[X(u):=\lim_{t\to\infty}X_t(u)\] exists (in fact not only almost surely, but for \emph{all} $\omega$).
	
	Let us assume for contradiction that there exists $\epsilon>0$ such that the event $\{X(u)\geq\epsilon\}$ has positive
	probability. By monotonicity it holds $X_t(u)\geq X(u)$ for all $t\geq 0$. Due to the simple fact that the sum $\sum_{v\in V}\eta_t(v)=1$ is constant in
	time, the set $\{v\in V,\,\eta_t(v)\geq \frac\epsilon2\}$ can not contain more than $\lfloor\frac2\epsilon\rfloor$ vertices. Consequently, given $\{X(u)\geq\epsilon\}$, there has to be a gap in $[\frac\epsilon2,\epsilon]$ of width at least $\frac{\epsilon^2}{4}$ containing no values of $\{\eta_t(v),\; v\in V\}$. Since there are nodes with values above the gap (due to $X_t(u)\geq \epsilon$) and nodes with values below (in fact $\eta_t(v)=0$ for all but finitely many $v\in V$), there will a.s.\ be an update along an edge with incident vertices having values on different sides of the gap after time $t$. Such an update reduces the total energy by an amount of at least  $\mu(1-\mu)\frac{\epsilon^4}{8}$. As this can happen only finitely many times, we arrive again at a contradiction proving $X(u)\equiv0$.
\end{proof}

Despite the fact that for arbitrary $t$ it holds $\max\{\xi_t(u,v),\; u,v\in V\}=1$ with probability 1 on an infinite, connected graph $G$, the maximal contribution to a fixed vertex $v$ converges to 0 almost surely:

\begin{lemma}\label{recipient}
	Let $Y_t(v)=\max\{\xi_t(u,v),\; u\in V\}$ denote the maximal contribution to vertex $v$ by another vertex at time $t$. Then $Y_t(v)\stackrel{\text{a.s.}}{\longrightarrow} 0$, as $t\to\infty$.
\end{lemma}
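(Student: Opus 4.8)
The plan is to exploit two facts already established: the global bound of Theorem \ref{globalbound}, which forces contributions from distant senders to be small uniformly in time, and the sender lemma (Lemma \ref{sender}), which controls each individual contribution $\xi_t(u,v)$ through $X_t(u)$. The main obstacle to keep in mind is that, unlike $X_t(u)$, the recipient-maximum $Y_t(v)$ is \emph{not} monotone in $t$: an update along an edge incident to $v$ replaces the column $\{\xi_t(u,v)\}_{u}$ by a convex combination of itself and the column of a neighbour, and the neighbouring column may carry a higher peak. Hence the energy-decrease argument used for Lemma \ref{sender} cannot be run directly for $Y_t(v)$, and the whole point is to circumvent this non-monotonicity rather than to fight it.

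First I would fix $v$ and $\epsilon>0$ and split the senders according to their distance from $v$. Applying Theorem \ref{globalbound} to the dual SAD-process started in $v$, every sender $u$ with $d(u,v)\ge\frac1\epsilon-1$ satisfies $\xi_t(u,v)\le\frac{1}{d(u,v)+1}\le\epsilon$ for \emph{all} $t\ge 0$. Thus the only senders that can ever contribute more than $\epsilon$ to $v$ lie in the ball $B_\epsilon:=\{u\in V:\ d(u,v)<\tfrac1\epsilon-1\}$, which is finite because $G$ has bounded degree.

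Next I would control the finitely many remaining columns via the sender lemma. Since $\xi_t(u,v)\le X_t(u)$ for every $u$ (as $X_t(u)$ is the maximum over all recipients), Lemma \ref{sender} gives $\xi_t(u,v)\to 0$ almost surely for each fixed $u\in B_\epsilon$. Because $B_\epsilon$ is finite, on an event of probability one there is a (random) time after which $\xi_t(u,v)<\epsilon$ holds simultaneously for all $u\in B_\epsilon$; combined with the distance bound above this yields $Y_t(v)=\max_{u}\xi_t(u,v)\le\epsilon$ from that time on, whence $\limsup_{t\to\infty}Y_t(v)\le\epsilon$ almost surely.

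Finally, I would run this along a sequence $\epsilon_n\downarrow 0$ and intersect the corresponding probability-one events. This is only a countable intersection, since $\bigcup_n B_{\epsilon_n}$ is a countable set of vertices and the sender lemma holds a.s.\ for each of them, so the intersection again has full probability. On this single event $\limsup_{t\to\infty}Y_t(v)\le\epsilon_n$ for every $n$, and as $Y_t(v)\ge 0$ this forces $Y_t(v)\to 0$ almost surely, which is the claim. The only minor points requiring care are that the maximum defining $Y_t(v)$ is genuinely attained (each SAD-profile is finitely supported a.s., so the supremum over $u$ runs over a finite set) and that the countable family of almost-sure events is assembled correctly; beyond this no further estimates are needed.
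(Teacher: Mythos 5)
Your argument is correct and is essentially the same as the paper's: both use Theorem \ref{globalbound} to reduce the maximum to the finite ball around $v$ and then apply Lemma \ref{sender} (via the domination $\xi_t(u,v)\leq X_t(u)$) to each of the finitely many remaining senders, concluding $\limsup_{t\to\infty}Y_t(v)\leq\epsilon$ a.s.\ for arbitrary $\epsilon>0$. Your explicit intersection over a sequence $\epsilon_n\downarrow 0$ merely spells out what the paper leaves implicit, so there is no substantive difference.
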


\begin{proof}
	Fix arbitrary $\epsilon>0$ and consider the ball \[B_{\frac1\epsilon}(v)=\{u\in V,\, d(u,v)\leq\tfrac1\epsilon\},\]
	which contains finitely many vertices (recall that we assume bounded degree for $G$).
	Then we know from Theorem \ref{globalbound} that \[Y_t(v)\leq\max\big\{\epsilon,\max\{\xi_t(u,v),\; u\in B_{\frac1\epsilon}(v)\} \big\}.\]
	From Lemma \ref{sender} we know that $\xi_t(u,v)\leq X_t(u)$ converges to $0$ almost surely for every fixed $u$. As the maximum is taken over $B_{\frac1\epsilon}(v)$, i.e.\ a finite number of such random variables, we can conclude
	$\limsup_{t\to\infty}Y_t(v)\leq \epsilon$ a.s., which concludes the proof as the $Y_t$ are positive and $\epsilon>0$ was arbitrary.
\end{proof}

In fact, we proved a slightly stronger result:
\begin{corollary}\label{correcip}
	For arbitrary but fixed $v\in V$, it also holds that \[\max_{u\in V}\sup_{s\geq t}\xi_s(u,v)\stackrel{\text{a.s.}}{\longrightarrow} 0,\text{ as }t\to\infty.\]
\end{corollary}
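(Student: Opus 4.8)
The plan is to recognize Corollary~\ref{correcip} as a strengthening of Lemma~\ref{recipient} in which the value $\xi_t(u,v)$ at the single time $t$ is replaced by its supremum over all later times $s\geq t$. The entire truncation argument used for Lemma~\ref{recipient} should carry over almost verbatim; the one genuinely new ingredient needed is a way to control $\sup_{s\geq t}\xi_s(u,v)$ for a \emph{fixed} sender $u$. Here I would exploit the observation, already recorded in the proof of Lemma~\ref{sender}, that the sender-maximum $X_s(u)=\max\{\xi_s(u,w),\,w\in V\}$ is \emph{decreasing} in $s$. Since $\xi_s(u,v)\leq X_s(u)$ for every $s$, monotonicity gives for each $s\geq t$ the domination $\xi_s(u,v)\leq X_s(u)\leq X_t(u)$, whence
\[\sup_{s\geq t}\xi_s(u,v)\leq X_t(u).\]
By Lemma~\ref{sender}, $X_t(u)\to 0$ almost surely as $t\to\infty$, so the time-supremum for each fixed $u$ is dominated by a single decreasing quantity that vanishes.

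With this in hand I would reproduce the truncation of Lemma~\ref{recipient}. Fix $\epsilon>0$ and split $V$ into the finite ball $B_{\frac1\epsilon}(v)$ and its complement. For $u\notin B_{\frac1\epsilon}(v)$ we have $d(u,v)>\tfrac1\epsilon$, so Theorem~\ref{globalbound} yields the deterministic, time-uniform bound $\xi_s(u,v)\leq \frac{1}{d(u,v)+1}<\epsilon$ for all $s$; hence $\sup_{s\geq t}\xi_s(u,v)<\epsilon$ for these $u$, uniformly in $t$. For the finitely many $u\in B_{\frac1\epsilon}(v)$ the previous paragraph gives $\sup_{s\geq t}\xi_s(u,v)\leq X_t(u)$, and taking a maximum over a finite set preserves almost sure convergence to $0$. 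Combining the two cases,
\[\max_{u\in V}\sup_{s\geq t}\xi_s(u,v)\leq \max\Big\{\epsilon,\ \max_{u\in B_{\frac1\epsilon}(v)}X_t(u)\Big\},\]
and letting $t\to\infty$ the right-hand side tends to $\epsilon$ almost surely. Since $\epsilon>0$ was arbitrary and the quantity is nonnegative, the desired almost sure convergence to $0$ follows.

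The main obstacle to watch for is precisely the interchange of the two infinite operations, namely the maximum over the infinite vertex set $u\in V$ and the supremum over the infinite time horizon $s\geq t$; a naive union bound over all $u$ and all $s$ would not close. The resolution is structural rather than computational: the monotonicity of $X_s(u)$ collapses the time-supremum for each fixed sender into the single value $X_t(u)$ at the left endpoint, while Theorem~\ref{globalbound} renders the contribution of all but finitely many senders negligible uniformly in time. Once these two facts are in place, only a finite maximum of almost surely vanishing random variables remains, so no further care with the joint supremum is required.
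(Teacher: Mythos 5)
Your proposal is correct and follows essentially the same route as the paper: the authors likewise dominate $\sup_{s\geq t}\xi_s(u,v)$ by $X_t(u)$ via the monotonicity of $t\mapsto X_t(u)$ and then rerun the truncation argument of Lemma \ref{recipient} (time-uniform bound from Theorem \ref{globalbound} outside the finite ball $B_{\frac1\epsilon}(v)$, Lemma \ref{sender} for the finitely many remaining senders). Your write-up merely spells out the details that the paper's one-line proof leaves implicit.
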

\begin{proof}
	This claim follows immediately from the line of reasoning above, using the fact that not only $\xi_t(u,v)$ but also $\sup_{s\geq t}\xi_s(u,v)$ is dominated by $X_t(u)=\max\{\xi_t(u,v),\; v\in V\}$, due to the fact that $X_t(u)$ is non-increasing in $t$.	
\end{proof}

\noindent
We are now ready to prove our main result.\\

\noindent
\begin{nproof}{of Theorem \ref{avgthm}}
	To begin with, we can assume w.l.o.g.\ that $m_1=0$ (by subtracting the mean $m_1$ from all values if need be). As before, let $\mathcal{F}_t$ denote the sigma field generated by all Poisson-processes until time $t\geq0$. Using \eqref{convcomb} and recalling \eqref{defsecmom} we find
	\begin{align*} \IE[\eta_t^2(v)\,|\,\mathcal{F}_t]&=\sum_{u\in V}\xi_t^2(u,v)\,m_2\\&\leq \max_{u\in V}\xi_t(u,v)\cdot m_2\cdot\underbrace{\sum_{u\in V}\xi_t(u,v)}_{=1},
	\end{align*}
	where dominated convergence, $m_1=0$ and the independence of $\{\eta_0(v)\}_{v\in V}$ and $\mathcal{F}_t$ were used in the first step. Using the tower property we arrive at
	\[ \IE\big(\eta_t^2(v)\big)=\IE\big(\IE[\eta_t^2(v)\,|\,\mathcal{F}_t]\big) \leq m_2\cdot\IE\Big(\max_{u\in V}\xi_t(u,v)\Big)\]
	Finally, from Lemma \ref{recipient} together with uniform boundedness of the $\xi_t(u,v)$, i.e.\ $0\leq\xi_t(u,v)\leq1$ for all $u,v\in V,\ t\geq0$, it follows that $Y_t(v)=\max_{u\in V}\xi_t(u,v)$ converges to $0$ also in $L^1$, which proves the claim.
\end{nproof}

\noindent
{\sc Acknowledgements} We thank Stefan Weltge for transmitting our question to the authors of \cite{SAD} and Tony Huynh for updating us on their progress.


\vspace{0.5cm}\noindent
	{\sc \small 
	Nina Gantert\\
	SoCIT, Department of Mathematics,\\
	Technical University of Munich,\\
	Boltzmannstr. 3, 85748 Garching\\
	Germany}\\
	gantert@ma.tum.de\\

\vspace{0.5cm}\noindent
	{\sc \small 
	Timo Vilkas\\
	Statistiska institutionen,\\
	Ekonomih\"ogskolan vid Lunds universitet,\\
	220\,07 Lund\\
	Sweden}\\
	timo.vilkas@stat.lu.se\\


\begin{thebibliography}{9}

\bibitem{survey}
	{\sc D.\ Aldous} and {\sc D.\ Lanoue}\\
	\textit{A lecture on the averaging process,}
	{Probability Surveys},
	Vol. 9, pp.\ 90-102, 2012.
\bibitem{MTP}
	{\sc I. Benjamini}, {\sc R. Lyons}, {\sc Y. Peres} and {\sc O. Schramm}\\
	\textit{Group-invariant percolation on graphs,}
	{Geometric \& Functional Analysis (GAFA)},
	Vol. 9 (1), pp.\ 29-66, 1999.
\bibitem{cutoff}
	{\sc P.\ Caputo}, {\sc M.\ Quattropani} and {\sc F.\ Sau}\\ 
	\textit{Cutoff for the averaging process on the hypercube and complete bipartite graphs,}
	{Electronic Journal of Probability},
	Vol.\ 28,  no.\ 100, pp.\ 1-31, 2023.
\bibitem{repeated1}
	{\sc S.\ Chatterjee}, {\sc P.\ Diaconis}, {\sc A.\ Sly} and {\sc L.\ Zhang}\\ 
	\textit{A phase transition for repeated averages,}
	{The Annals of Probability}
	Vol.\ 50 (1), pp.\ 1-17, 2022.
\bibitem{SAD}
	{\sc J.P.\ Gollin}, {\sc K.\ Hendrey}, {\sc H.\ Huang}, {\sc T.\ Huynh}, {\sc B.\ Mohar}, {\sc S.\ Oum}, {\sc N.\ Yang}, {\sc W.\ Yu} and {\sc X.\ Zhu}\\
	\textit{Sharing tea on a graph,}
	{arXiv: 2405.15353},\\
	\url{https://arxiv.org/abs/2405.15353}.
\bibitem{ShareDrink}
	{\sc O.\ H\"aggstr\"om}\\
	\textit{A pairwise averaging procedure with application to consensus formation in the Deffuant model,}
	{Acta Applicandae Mathematicae},
	Vol.\ 119 (1), pp.\ 185-201, 2012.
\bibitem{Liggett}
	{\sc T.M.\ Liggett}\\
	\textit{``Interacting Particle Systems''},
	Springer, 1985.
\bibitem{repeated2}
	{\sc R.\ Movassagh}, {\sc M.\ Szegedy} and {\sc G.\ Wang}\\
	\textit{Repeated Averages on Graphs,}
	{Annals of Applied Probability},
	to appear.
\bibitem{deGroot}
{\sc D. Elboim}, {\sc Y. Peres} and {\sc R. Peretz}\\
\textit{The Asynchronous DeGroot Dynamics},
{arXiv: 2209.05764},\\
\url{https://arxiv.org/abs/2209.05764}
 \end{thebibliography}
\end{document}